\newcommand{\Nat}{\mathbb N}
\newcommand{\al}{\alpha}
\newcommand{\om}{\omega}
\newtheorem{thm}{Theorem}[section]
\newtheorem{cor}[thm]{Corollary}
\newtheorem{prop}[thm]{Proposition}
\newtheorem{lem}[thm]{Lemma}
\theoremstyle{definition}
\newtheorem{defn}[thm]{Definition}
\newtheorem{notn}[thm]{Notation}
\newtheorem{Proposition}[thm]{Proposition}
\theoremstyle{remark}
\renewcommand{\wp}{{\mathcal P}}
\let\c@equation\c@thm
\numberwithin{equation}{section}
\author{Harry Altman and Andreas Weiermann}
\address{Department for Mathematics: Analysis, Logic and Discrete Mathematics: WE16\\
Krijgslaan 281 S8\\
B-9000 Ghent\\
Belgium}
\address{Department of Mathematics\\
2074 East Hall\\
530 Church Street\\
Ann Arbor, MI 48109\\
United States}
\title[Maximum linearizations of lower sets in $\mathbb{N}^m$]{Maximum
linearizations of lower sets in $\mathbb{N}^m$ with application to monomial
ideals}
\date{May 8, 2025}
\begin{document}
\begin{abstract}
We compute the type (maximum linearization) of the well partial order of bounded
lower sets in $\mathbb{N}^m$, ordered under inclusion, and find it is
$\omega^{\omega^{m-1}}$.  Moreover we
compute the type of the set of all lower sets in $\mathbb{N}^m$, a topic studied
by Aschenbrenner and Pong 
in \cite{AP}, and find that it is equal to
\[ \omega^{\sum_{k=1}^{m} \omega^{m-k}\binom{m}{k-1} }+ 1. \]
As a consequence we deduce corresponding lower bounds on effectively given sequences of lower sets and 
effectively given sequences of monomial ideals
in $F[X,Y]$ where $F$ is a field.
\end{abstract}

\maketitle

\section{Introduction}

In this paper we compute the type of several well partial orders.  The
\emph{type} of a well partial order $X$, denoted $o(X)$, is the largest order
type of a well-order extending the order on $X$; this was proven to exist by De
Jongh and Parikh \cite{DJP}. The type $o(X)$ can also be characterized
inductively as the smallest ordinal greater than $o(Y)$ for any proper lower
set $Y$ of $X$.  The theory has been rediscovered several times; the term
``type'' comes from Kriz and Thomas \cite{KT}.  

In this paper we are interested in well partial orders whose elements are lower
sets in the partial order $\mathbb{N}^m$.  
Here, given a partial order $X$, a {\em lower} set in $X$ is a subset $S$ of $X$
that is downward closed, that is, if $s \in S$, $t \in X$, and $t \leq s$,
then $t \in S$; they are also known as {\em initial segments} of $X$. Every
subset $S$ of $X$ is contained in a smallest initial segment of $X$,
called the initial segment of $X$ generated by $S$. An initial segment
generated by a finite subset of $X$ is said to be finitely generated. 

We define:

\begin{defn}
If $X$ is a partial order, we define $I(X)$ to be the poset of lower sets in $X$
ordered under inclusion, and
define $D(X)$ to be the ordered subset of $I(X)$ consisting of the finitely generated
initial segments of $X$.
\end{defn}

Then we are interested in $D(\mathbb{N}^m)$ and $I(\mathbb{N}^m)$.  (Note that
in the case of $\mathbb{N}^m$, we could equivalently define $D(\mathbb{N}^m)$ to
be the set of bounded lower sets, or the set of finite lower sets.)  We prove
the following two theorems:

\begin{thm}
\label{mk}
For $k,m\ge 1$,
\[o(D(\mathbb{N}^m\times k)) = 
\omega^{\omega^{m-1}k}. \]
In particular, \[o(D(\mathbb{N}^m)) = 
\omega^{\omega^{m-1}}. \]
\end{thm}

\begin{thm}
\label{omegaunbounded}
\[ o(I(\mathbb{N}^m)) = 
\omega^{\sum_{k=1}^{m} \omega^{m-k}\binom{m}{k-1} }+ 1. \]
\end{thm}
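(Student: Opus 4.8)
The plan is to reduce the computation to Theorem~\ref{omegabounded} via a decomposition of lower sets according to their behaviour ``at infinity''. First, $I(\mathbb{N}^m)$ has a greatest element, namely $\mathbb{N}^m$ itself, so by the inductive characterisation of type, $o(I(\mathbb{N}^m)) = o(I'(\mathbb{N}^m)) + 1$, where $I'(\mathbb{N}^m) := I(\mathbb{N}^m)\setminus\{\mathbb{N}^m\}$ is the poset of \emph{proper} lower sets; it thus suffices to prove $o(I'(\mathbb{N}^m)) = \omega^{E_m}$ with $E_m := \sum_{k=1}^m\binom{m}{k-1}\omega^{m-k}$. Re-indexing by the size $i=k-1$ of a subset of $\{1,\dots,m\}$ shows $E_m = \bigoplus_{S\subsetneq\{1,\dots,m\}}\omega^{(m-|S|)-1}$, a natural sum in which the exponent $(m-|S|)-1$ occurs $\binom{m}{|S|}$ times for each $|S|=0,\dots,m-1$. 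Since $o(D(\mathbb{N}^n))=\omega^{\omega^{n-1}}$ by Theorem~\ref{omegabounded}, and since the type of a finite product of well partial orders is the natural (Hessenberg) product of the types, this says $\omega^{E_m} = \bigotimes_{S\subsetneq\{1,\dots,m\}} o(D(\mathbb{N}^{\,m-|S|}))$, so the goal becomes
\[ o\bigl(I'(\mathbb{N}^m)\bigr) \;=\; \bigotimes_{S\subsetneq\{1,\dots,m\}} o\bigl(D(\mathbb{N}^{\,m-|S|})\bigr). \]

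The structural input is a normal form for lower sets: applying Dickson's lemma to the complementary upper set and distributing an intersection over finite unions, every lower set of $\mathbb{N}^m$ is a finite union of boxes $\prod_{j=1}^m[0,a_j)$ with $a_j\in\mathbb{N}\cup\{\infty\}$. Consequently, writing $\overline{\mathbb{N}} := \mathbb{N}\cup\{\infty\}$ (order type $\omega+1$), the map sending a lower set $L$ to its downward closure $\overline{L}$ in $\overline{\mathbb{N}}^m$ is an isomorphism $I(\mathbb{N}^m)\cong D(\overline{\mathbb{N}}^m)$. For $S\subseteq\{1,\dots,m\}$, let $\overline{L}^{\,S}$ denote the trace of $\overline{L}$ on the face $\{x: x_j=\infty \text{ for } j\in S\}\cong \overline{\mathbb{N}}^{\{1,\dots,m\}\setminus S}$; this is again a finitely generated downset, $\overline{L}^{\,\emptyset}=\overline{L}$, and $\overline{L}^{\,\{1,\dots,m\}}$ is nonempty exactly when $L=\mathbb{N}^m$. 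The idea is to subtract from $\overline{L}^{\,S}$ the part already prescribed by the strictly larger faces $\overline{L}^{\,S'}$, $S'\supsetneq S$, by an inclusion--exclusion, so that what remains is a \emph{bounded} lower set $L_S$ of $\mathbb{N}^{\{1,\dots,m\}\setminus S}$ — an element of $D(\mathbb{N}^{m-|S|})$ — and so that a proper lower set $L$ is recorded faithfully by the tuple $(L_S)_{S\subsetneq\{1,\dots,m\}}$. (The subset $\{1,\dots,m\}$ is omitted precisely because it only arises for $L=\mathbb{N}^m$.)

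For the lower bound one argues in reverse: given any tuple $(B_S)_S$ of finite lower sets $B_S\subseteq\mathbb{N}^{\{1,\dots,m\}\setminus S}$, placing suitably scaled copies of the $B_S$ in coordinate regions that do not interfere for distinct $S$ yields a proper lower set realising $(B_S)_S$, which exhibits $\prod_{S\subsetneq\{1,\dots,m\}} D(\mathbb{N}^{m-|S|})$ as a subposet of $I'(\mathbb{N}^m)$; hence $o(I'(\mathbb{N}^m))\ge\bigotimes_S o(D(\mathbb{N}^{m-|S|}))$. The main obstacle is the matching upper bound: one must verify that $L\mapsto(L_S)_{S}$ is a quasi-embedding, i.e.\ that $L_S\subseteq L'_S$ for all $S$ forces $L\subseteq L'$. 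This is the delicate point, since the crude trace $L\mapsto(\overline{L}^{\,S})_S$ lands in $\prod_S I(\mathbb{N}^{m-|S|})$, which is far too big, so the inclusion--exclusion defining the $L_S$ must be arranged to retain exactly enough information to recover inclusion while keeping each coordinate inside $D(\mathbb{N}^{m-|S|})$. Granting this, $o(I'(\mathbb{N}^m))\le o\bigl(\prod_S D(\mathbb{N}^{m-|S|})\bigr)=\bigotimes_S o(D(\mathbb{N}^{m-|S|}))=\omega^{E_m}$, and combining the two bounds and restoring the top element gives $o(I(\mathbb{N}^m))=\omega^{E_m}+1$, as claimed.
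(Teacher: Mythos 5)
Your high-level framing is on the right track: you correctly reduce to $o(I_0(\mathbb{N}^m))=\omega^{E_m}$ where $I_0(\mathbb{N}^m)=I(\mathbb{N}^m)\setminus\{\mathbb{N}^m\}$, correctly express $\omega^{E_m}$ as the natural product $\bigotimes_{\emptyset\ne C\subseteq\{1,\ldots,m\}}\omega^{\omega^{|C|-1}}$, and correctly identify the structural input (Dickson/finite union of boxes, cf.\ Lemma~\ref{rectangles}, and decomposing by traces on faces at infinity). But both of your proposed tools fail where it matters, and the paper's proof avoids them for exactly that reason.

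For the upper bound you want the map $L\mapsto(L_S)_S$ to be a quasi-embedding into $\prod_S D(\mathbb{N}^{m-|S|})$, with $L_S$ obtained by ``inclusion--exclusion'' subtracting the strips coming from larger faces. This map isn't even well-defined: take $m=2$ and $L'=\bigl(\mathbb{N}\times\{0,1\}\bigr)\cup\bigl(\{0,\ldots,5\}\times\{0,1,2\}\bigr)$. Subtracting the horizontal strip $\mathbb{N}\times\{0,1\}$ leaves $\{0,\ldots,5\}\times\{2\}$, which is not a lower set and so is not an element of $D(\mathbb{N}^2)$. Any repair (taking downward closures, projecting, etc.)\ changes the information recorded and must be re-checked against the quasi-embedding requirement, and I see no candidate that works; you grant precisely this step without proof. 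The paper never constructs such a quasi-embedding. Instead, Proposition~\ref{puttogetherlem} builds a \emph{monotonic surjection} $\varphi:\prod_{\emptyset\ne C}D\bigl(\prod_{i\in C}\mathbb{N}\bigr)\to I_0(\mathbb{N}^m)$ via $\varphi((S_C)_C)=\bigcup_C\pi_C^{-1}(S_C)$. Surjections run the other way and are much more forgiving: one tuple per rectangle, union over a rectangle decomposition, and you are done --- no injectivity, no order-reflection, no inclusion--exclusion.

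For the lower bound you propose to embed $\prod_S D(\mathbb{N}^{m-|S|})$ as a subposet of $I_0(\mathbb{N}^m)$ by ``placing suitably scaled copies of the $B_S$ in non-interfering coordinate regions.'' The difficulty is that where $B_S$ can be placed without interference depends on the components $B_{S'}$ for $S'\supsetneq S$; once you enlarge a strip $B_{S'}$, the region available for $B_S$ moves, so the assignment $(B_S)_S\mapsto L$ cannot be componentwise monotone, and the product does not sit inside $I_0(\mathbb{N}^m)$ as a subposet in any obvious way. The paper sidesteps this by not attempting a product embedding at all: Proposition~\ref{lowerboundprop} inducts over a lattice of ``partial specifications,'' freezing the face data for larger $S$ before linearizing the freedom remaining for smaller $S$, and directly produces a long total order on $I_0(\mathbb{N}^m)$ of type $\bigotimes_C\omega^{\omega^{|C|-1}}$. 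That cascade --- fix the large-face data, then shift a copy of $\mathbb{N}^{|C|}$ outside the region those data constrain --- is precisely the rigorous version of your ``non-interfering placement,'' and it is nontrivial to set up. In short: right target, right combinatorial identity, but the two load-bearing constructions in your sketch (the quasi-embedding for $\le$ and the subposet embedding for $\ge$) are both gaps, and they are the hard part of the theorem.
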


The last of these questions, that of determining $o(I(\mathbb{N}^m))$, was
asked earlier by Aschenbrenner and Pong \cite{AP}, who provided upper and
lower bounds.  Theorem~\ref{mk} provides an exact answer to this
question when $k=1$.

Theorem~\ref{mk} for $k=1$, the case of $D(\mathbb{N}^m)$, is the ``core'' case,
handled by means of the inductive characterization of $o(X)$ above.  Meanwhile,
the cases of $D(\mathbb{N}^m \times k)$ and $I(\mathbb{N}^m)$ are handled
combinatorially, by using Theorem~\ref{mk} in combination with De
Jongh and Parikh's theorems that $o(X\amalg Y)=o(X)\oplus o(Y)$ and $o(X\times
Y)=o(X)\otimes o(Y)$, where $\oplus$ and $\otimes$ are the natural (or
Hessenberg) sum and product of ordinals.

More specifically, the case of
$D(\mathbb{N}^m \times k)$ is handled by putting together $k$ copies of
$D(\mathbb{N}^m)$, while the case of
$I(\mathbb{N}^m)$ is handled by putting together
$D(\mathbb{N}^m)$ together with $D(\mathbb{N}^C)$, where $C$ ranges over
nonempty subsets of $\{1,\ldots,m\}$.

In Section~\ref{secapp} we will show that the lengths of effectively given sequences of 
lower sets and effectively given sequences of
monomial ideals in $F[X,Y]$ 
are bounded from below by Hardy functions whose levels are determined by the type
of the underlying well partial ordering.

In a future paper \cite{genlower}, we will extend these results to lower sets in
products of larger ordinals as well.

\section{Bounded lower sets in $\mathbb{N}^m$}

In this section we show how to compute $o(D(\mathbb{N}^m))$, proving
Theorem~\ref{mk} for $k=1$.

First, we recall some basic properties of the type:
\begin{prop}[De Jong and Parikh, \cite{DJP}]
Let $X$ and $Y$ be well partial orders.  If $X$ embeds in $Y$, then $o(X)\le
o(Y)$.  Similarly, if there is a weakly increasing surjection from $Y$ onto $X$,
then $o(X)\le o(Y)$.
In particular, if $\le$ and $\le'$ are two well partial orderings on the set
$X$, and $\le'$ extends $\le$, then $o(X,\le')\le o(X,\le)$.  

Also, if $X$ and $Y$ are any two well partial orders, one has
\[ o(X\amalg Y)=o(X)\oplus o(Y)\]
and
\[ o(X\times Y)=o(X)\otimes o(Y),\]
where $\oplus$ and $\otimes$ are the natural (or Hessenberg) sum and product of
ordinals.  As such, if $X$ is a well partial order and $S,T\subseteq X$, then
$o(S\cup T)\le o(S)\oplus o(T)$.
\end{prop}

In order to prove Theorem~\ref{mk}, we apply a similar
lemma (which had been communicated by Schnoebelen and Schmitz to the authors
and which recently was made available on arXiv \cite{Abriola}).

\begin{lem}[\cite{Abriola}]
\label{dlem}
Let $X$ be a well partial order.  Then 
\[o(D(X))\le 2^{o(X)}.\]
\end{lem}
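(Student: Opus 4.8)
The plan is to avoid the inductive characterization of $o$ altogether and instead build, by hand, an injective order-preserving map
\[ \phi\colon D(X)\longrightarrow 2^{o(X)}, \]
from which $o(D(X))\le 2^{o(X)}$ follows immediately: ordering $D(X)$ by the $\phi$-values produces a well-order extending inclusion whose order type is that of $\phi(D(X))\subseteq 2^{o(X)}$, hence at most $2^{o(X)}$.

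First I would fix a \emph{linearization} $\ell$ of $X$, i.e.\ a bijection $\ell\colon X\to\alpha$ with $\alpha:=o(X)$ that is strictly order-preserving ($x<_X y\Rightarrow\ell(x)<\ell(y)$); such an $\ell$ exists by the definition of $o(X)$ together with the De Jongh--Parikh theorem that the maximum is attained. The next step is to observe that every $S\in D(X)$ is recovered from its finite set of maximal elements: $\max(S)$ is an antichain in the wpo $X$, and moreover it is contained in any finite generating set of $S$, so $\max(S)$ is finite, $S={\downarrow}\max(S)$, and $S\mapsto\max(S)$ is injective on $D(X)$. Transporting along $\ell$, put $A_S:=\ell(\max S)$, a finite subset of $\alpha$, and let $\phi(S)$ be the ordinal with base-$2$ representation supported on $A_S$, that is $\phi(S)=2^{\gamma_1}+\dots+2^{\gamma_k}$ when $A_S=\{\gamma_1>\dots>\gamma_k\}$ (and $\phi(\emptyset)=0$). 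I would then record the standard facts about base-$2$ expansions of ordinals: each ordinal has a unique such expansion; $\beta<2^\alpha$ iff its support is contained in $\alpha$; and $\beta<\beta'$ iff the largest ordinal in the symmetric difference $A\triangle A'$ of their supports lies in the support $A'$ of $\beta'$. These give at once that $\phi$ is injective and takes values $<2^\alpha$.

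The substance of the proof is the claim that $\phi$ is order-preserving, i.e.\ $S\subseteq T\Rightarrow\phi(S)\le\phi(T)$, and this is the step I expect to be the main obstacle. I would argue by contradiction. If $\phi(S)>\phi(T)$, the comparison rule yields $m:=\max(A_S\triangle A_T)\in A_S\setminus A_T$, say $m=\ell(x)$ with $x\in\max(S)$; since $m\notin A_T$ we get $x\notin\max(T)$. But $x\in S\subseteq T={\downarrow}\max(T)$, so $x\le y$ for some $y\in\max(T)$, and $x\notin\max(T)$ forces $x<_X y$, hence $\ell(y)>m$. Since $\ell(y)\in A_T$ and $\ell(y)$ exceeds the largest element of $A_S\triangle A_T$, we get $\ell(y)\notin A_S\triangle A_T$, so $\ell(y)\in A_S=\ell(\max S)$, i.e.\ $y\in\max(S)$. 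Now $x,y\in\max(S)$ with $x<_X y$ contradicts $\max(S)$ being an antichain. This contradiction proves the claim, and with it the lemma.

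The whole force of the argument lies in that last contradiction, where the antichain property of $\max(S)$ interacts with the strictness of the linearization $\ell$; everything else is routine bookkeeping about base-$2$ ordinal arithmetic and about finitely generated lower sets. As a sanity check the bound is sharp: for $X=\mathbb{N}^m$, where $o(X)=\omega^m$, it recovers the estimate $o(D(\mathbb{N}^m))\le 2^{\omega^m}=\omega^{\omega^{m-1}}$ from Theorem~\ref{omegabounded}.
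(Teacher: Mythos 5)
Your construction of $\phi$ is correct as far as it goes: for $S\in D(X)$, the set $\max(S)$ is a finite antichain with $S={\downarrow}\max(S)$, the map $\phi(S)=\sum_{\gamma\in\ell(\max S)}2^\gamma$ is injective with image in $2^{o(X)}$, and your contradiction argument does establish $S\subseteq T\Rightarrow\phi(S)\le\phi(T)$ (the interaction between the antichain property of $\max(S)$ and strict monotonicity of $\ell$ is exactly the right idea). The problem is the very first inference, which you describe as ``immediate'': from an injective order-preserving $\phi\colon D(X)\to 2^{o(X)}$ you conclude $o(D(X))\le 2^{o(X)}$. This does not follow. What you have constructed is \emph{one} linearization of $D(X)$ of order type $\le 2^{o(X)}$, but $o(D(X))$ is defined as the \emph{maximum} order type of a linearization, so a single linearization gives at best a \emph{lower} bound on $o(D(X))$, not an upper bound. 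An order-preserving injection of a wpo into an ordinal can badly underestimate its type: take $P=\omega\sqcup\omega$ (two incomparable chains); interleaving gives an injective order-preserving map $P\to\omega$, yet $o(P)=\omega\oplus\omega=\omega\cdot 2$. The implication you want would require $\phi$ to be order-\emph{reflecting} (so that every linearization of $D(X)$ transports to one of an ordinal $\le 2^{o(X)}$), but your $\phi$ is provably not: already for $X=\{a,b\}$ an antichain, $\phi(\{a\})<\phi(\{b\})$ while $\{a\}\not\subseteq\{b\}$; indeed no injection of a non-linear poset into a linear order can reflect the order.

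So there is a genuine gap, and it sits precisely at the step you flagged as routine rather than at the step you flagged as the main obstacle. To close it one must bound \emph{every} linearization, which is why the paper instead uses the De Jongh--Parikh inductive characterization $o(P)=\sup\{o(L)+1:L\text{ a proper lower set of }P\}$ and inducts on $o(X)$: for a proper lower set of $D(X)$ avoiding some $I={\downarrow}F$, it is covered by the pieces $D(\{y:y\not\ge x\})$ for $x\in F$, whose types are controlled by induction (with separate handling of the successor and limit cases of $o(X)$). Your map $\phi$ is a nice explicit construction and, given the theorem, its image has order type exactly $2^{o(X)}$ (so your linearization is in fact maximal), but establishing that maximality is the actual content of the lemma and cannot be waved through. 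If you want to salvage the approach without invoking the inductive characterization, you would need an independent argument that your particular linearization has maximal type -- for instance by producing, for every linearization $\preceq$ of $D(X)$, an order-preserving injection from $(D(X),\preceq)$ into $(D(X),\prec_\phi)$ -- and that is not routine.
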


Note that Abriola et al. actually stated their lemma not for $D(X)$,
but rather for the more commonly-studied
$(\wp_{\mathrm{fin}}(X),\leq_{\mathrm{m}})$, where $\wp_\mathrm{fin}(X)$ denotes
the set of finite subsets of $X$ and where we define $S\le_\mathrm{m} T$
if for every $s\in S$, there is some $t\in T$ with $s\le t$.  Of course,
$(\wp_{\mathrm{fin}}(X),\leq_{\mathrm{m}})$ is not actually isomorphic to
$D(X)$, as the former lacks antisymmetry, being only a quasi-order rather than a
partial order; but after quotienting out by equivalences the resulting partial
order is isomorphic to $D(X)$.  So in essence these are the same.

For convenience of the reader we include a proof of Lemma~\ref{dlem}.

\begin{proof}
We use standard arguments from \cite{dianaschmidt}  (following the lines of
\cite{provingterminationfortermrewritingsystems}).  First note that if $o(X)=0$
(i.e. $X$ is empty), the statement is trivial.

Now suppose that $o(X)$ is a limit ordinal. Then $2^{o(X)}$ is a power of
$\omega$, i.e., additively closed.  Given
$a\in X$, let $X^{\not\ge a} := \{x\in X: x\not\ge a\}$, a proper lower subset
of $X$; so $o(X^{\not\ge a}) < o(X)$.  Let $I$ be an element of $D(X)$ and let
\[ S := \{ J\in D(X): J\not\supseteq I\}; \]
we need to show that $o(S) < 2^{o(X)}$.  Take a finite set $A$ such that $I$ is
the downward closure of $A$.  Then for each $J\in S$ we have $J\not\supseteq
A$, so $J\in D(X^{\not\ge a})$ for some $a \in A$.  This shows that $S\subseteq
\bigcup_{a\in A} D(X^{\not\ge a})$ and therefore that $o(S) \le \bigoplus_{a\in
A} o(D(X^{\not \ge a}))$.  By the inductive hypothesis, for each $a\in X$ we
have $o(D(X^{\not\ge a})) \le 2^{o(X^{\not\ge a})} < 2^{o(X)}$.  As $2^{o(X)}$
is additively closed, we obtain $o(S)< 2^{o(X)}$ as desired.

Finally suppose that $o(X)$ is a successor; say $o(X)=\eta+1$. 
Theorem 3.2
of De Jongh and Parikh \cite{DJP} yields a maximal
$x\in X$ with $o(X\setminus\{x\})=\eta$.  So if $I\in D(X)$, then either
$I\in D(X\setminus \{x\})$ or $x\in I$.
Moreover, we have an increasing surjection
\[ J \mapsto J\cup \{x\} : D(X\setminus\{x\}) \to \{I\in D(X): x\in I\}.\]
So, applying the inductive hypothesis, $o(D(X))\le
2^\eta \oplus 2^\eta$.  Since $2^\eta$ contains only a single
distinct power of $\omega$ in its Cantor normal form, one has $2^\eta\oplus
2^\eta=2^{\eta+1}=2^{o(X)}$; thus $o(D(X))\le
2^{o(X)}$.
This completes the proof.
\end{proof}

\begin{cor}
\label{upbd} For $m\ge 1$,
$o(D(\mathbb{N}^m))\leq \om^{\om^{m-1}}$.
\end{cor}

\begin{proof}
One has $o(\mathbb{N}^m)=\omega^m$, so
\[o(D(\mathbb{N}^m))\le 2^{\omega^m}
=(2^\omega)^{\omega^{m-1}}=\omega^{\omega^{m-1}}.\]
\end{proof}

Now we prove the lower bound:

\begin{prop}
\label{lobd} For $m\ge 1$,
$o(D(\mathbb{N}^m))\geq \om^{\om^{m-1}}$.
\end{prop}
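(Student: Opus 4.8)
The plan is to exhibit, for each $m$, an explicit order-preserving and order-reflecting map from a known well partial order of type $\omega^{\omega^{m-1}}$ into $D(\mathbb{N}^m)$, or to build directly a linear extension of $D(\mathbb{N}^m)$ of order type at least $\omega^{\omega^{m-1}}$. The cleanest route is probably the latter: since we already know $o(D(\mathbb{N}^m))\le\omega^{\omega^{m-1}}$, it suffices to produce one linear extension achieving that ordinal. I would proceed by induction on $m$, mirroring the structure of the first proof but this time making the well-order completely explicit rather than invoking the inductive characterization of $o$.

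First, I would handle the base case $m=1$: here $D(\mathbb{N})$ is just the chain $\{[0,n):n\in\mathbb{N}\}\cup\{\mathbb{N}\}$, which has order type $\omega+1\ge\omega=\omega^{\omega^{0}}$, so the bound holds (indeed we only need $\ge\omega$). For the inductive step, I would use the ``slicing'' idea already present in the paper: given $S\in D(\mathbb{N}^m)$, look at the slices $S_j=\{x\in\mathbb{N}^{m-1}:(x,j)\in S\}$ for $j\in\mathbb{N}$; because $S$ is a bounded lower set, each $S_j\in D(\mathbb{N}^{m-1})$, the sequence $(S_j)_j$ is weakly decreasing, and it is eventually $\emptyset$. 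Thus $S$ is coded by a finite weakly decreasing sequence of elements of $D(\mathbb{N}^{m-1})$. Fixing a linear extension $\prec$ of $D(\mathbb{N}^{m-1})$ of type $\omega^{\omega^{m-2}}$ (inductive hypothesis), a finite weakly decreasing sequence in $(D(\mathbb{N}^{m-1}),\prec)$ can be read as a natural-sum-style Cantor-normal-form-like expression, and ordering these expressions by the natural order on ordinals below $(\omega^{\omega^{m-2}})^\omega=\omega^{\omega^{m-2}\cdot\omega}=\omega^{\omega^{m-1}}$ gives a linear extension of $D(\mathbb{N}^m)$; one checks that $S\subseteq S'$ implies the associated expression for $S$ is $\le$ that for $S'$, because inclusion of lower sets forces slicewise inclusion, hence $\prec$-domination slicewise, which translates into the coefficientwise domination that makes the ordinal comparison go through. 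Surjectivity onto an initial segment of type $\omega^{\omega^{m-1}}$ follows because every finite weakly decreasing sequence of elements of an initial segment of type $\omega^{\omega^{m-2}}$ is realized by some bounded lower set.

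The main obstacle I anticipate is the bookkeeping in the inductive step: making precise the dictionary between ``finite weakly decreasing sequence of slices, ordered by a chosen linear extension of $D(\mathbb{N}^{m-1})$'' and ``Cantor normal form of an ordinal below $\omega^{\omega^{m-1}}$,'' and in particular verifying that set inclusion $S\subseteq S'$ really does map to the $\le$ relation on the corresponding ordinals. The subtlety is that $\prec$ is merely \emph{a} linear extension, not the partial order itself, so two incomparable lower sets in $D(\mathbb{N}^{m-1})$ get artificially ordered, and one must check this does not break monotonicity at the level of $D(\mathbb{N}^m)$; it does not, precisely because for nested $S\subseteq S'$ the slices are genuinely nested ($S_j\subseteq S'_j$ for all $j$), so $\prec$-comparisons are forced in the correct direction. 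Once that dictionary is set up cleanly, the ordinal arithmetic identity $(\omega^{\omega^{m-2}})^{\omega}=\omega^{\omega^{m-1}}$ finishes the argument, and it matches the upper bound already proven, yielding equality and hence Theorem~\ref{omegabounded} by a second, more hands-on route.
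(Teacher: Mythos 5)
Your overall plan---build a monotone map from $D(\mathbb{N}^m)$ into an ordinal via the slices $S_j$ and argue that its image has order type $\omega^{\omega^{m-1}}$---is in the right spirit, but the step you invoke for surjectivity is false for $m\ge 3$.  You assert that ``every finite weakly decreasing sequence of elements of an initial segment of type $\omega^{\omega^{m-2}}$ is realized by some bounded lower set,'' but the slice sequence $(S_0,S_1,\ldots)$ of an $S\in D(\mathbb{N}^m)$ must be weakly decreasing under \emph{inclusion}, not merely under your chosen linear extension $\prec$ of $D(\mathbb{N}^{m-1})$.  As soon as $D(\mathbb{N}^{m-1})$ is not already a chain (i.e.\ $m\ge 3$), $\prec$ strictly refines $\subseteq$, and there are many $\prec$-decreasing finite sequences that are not $\subseteq$-decreasing; for instance, taking $T_1,T_2\in D(\mathbb{N}^2)$ to be the downward closures of $(2,0)$ and $(0,2)$ respectively, the pair ordered by $\prec$ is realized by no $S\in D(\mathbb{N}^3)$.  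So your map is not surjective, and you still owe an argument that the genuinely $\subseteq$-decreasing sequences trace out a subset of order type $\omega^{\omega^{m-1}}$---exactly the nontrivial content.  There is also an arithmetical slip: a weakly decreasing finite sequence from a well order of type $\alpha$, read as a Cantor normal form, codes an ordinal below $\omega^{\alpha}$, not below $\alpha^\omega$; with $\alpha=\omega^{\omega^{m-2}}$ and $m\ge 3$ these differ enormously, since $\omega^{\alpha}=\omega^{\omega^{\omega^{m-2}}}$.  To land below $\alpha^\omega$ you would need a base-$\alpha$ positional coding $\sum_j \alpha^{\,l-1-j}\cdot\phi(S_j)$ rather than the CNF coding you describe; that can be made to work for monotonicity, but it does nothing to rescue the surjectivity claim.

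This is also a genuinely different route from the paper's.  The paper does not slice along the last coordinate, nor induct on $m$, in this proposition.  It takes the antichain $s(F)=\{(a_1,b_1),\ldots,(a_l,b_l)\}$ of maximal points of $F$ (with $a_i\in\mathbb{N}^{m-1}$), sends each $a_i$ to its lexicographic rank $ord(a_i)<\omega^{m-1}$, and assigns $F$ the ordinal $1+\bigoplus_i\omega^{ord(a_i)}b_i$, proving $F\subseteq G\Rightarrow ord(F)\le ord(G)$ by induction on $|s(G)|$, not on dimension.  If you want to keep your slicing intuition, the rigorous version is the one already present in the proof of Theorem~\ref{omegabounded}: exhibit $D(\mathbb{N}^{m-1}\times k)\hookrightarrow D(\mathbb{N}^m)$ for every $k$, use $o(D(\mathbb{N}^{m-1}\times k))=\omega^{\omega^{m-2}k}$, and conclude $o(D(\mathbb{N}^m))\ge\sup_k\omega^{\omega^{m-2}k}=\omega^{\omega^{m-1}}$, which sidesteps the surjectivity issue entirely.
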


\begin{proof}
For a sequence $a=(a_1,\ldots,a_{m-1})$ of length $m-1$ we define
$ord(a)=\om^{m-2}\cdot a_1+\cdots+ \om^0\cdot a_{m-1}$.  For a finite non empty
downward closed subset $F$ in $\mathbb{N}^{m}$ assume that $F$ is the downward
closure of $s(F)=\{(a_1,b_1),\ldots,(a_l,b_l)\}$ where $a_i$ is in
$\mathbb{N}^{m-1}$ and $b_i$ is in $\mathbb{N}$ and each $(a_i,b_i)$ is maximal
with respect to the pointwise ordering.  Let $ord(s(F))$ be the natural sum
over $1\leq i\leq l$ of the terms $\om^{ord(a_i)}\cdot b_i$.  Let
$ord(F):=1+ord(s(F))$. If $F$ is empty then $ord(F):=0.$ (Note that the singleton set consisting of the zero
vector describes the second minimal element.) We prove by induction on the
cardinality of $s(G)$ that $F\subseteq G$ implies $ord(F)\leq ord(G)$; the
proposition then follows from this.

So assume $\emptyset\not=F\subseteq G$ and assume that $s(F)=\{(a_1,b_1),\ldots,(a_k,b_k)\}$ and $s(G)=\{(c_1,d_1),\ldots,(c_l,d_l)\}$.
Let $S_1:=\{(a,b)\in s(F):  (a,b)\leq(c_1,d_1)\}$ and $S_2:=s(F)\setminus S_1$.
Then $S_2\subseteq s(G)\setminus\{(c_1,d_1)\}$ and by induction hypothesis we may assume that $ord(s(S_2))\leq ord(s(G)\setminus\{(c_1,d_1)\})$ if $S_2$ is not empty.
It thus suffices to show $ord(S_1)\leq ord(\{(c_1,d_1)\})$.
If $S_1$ is a singleton then the assertion follows easily. 
Problems might occur when $S_1$ is not a singleton because $ord(\{(c_1,d_1)\})$ is in general not additively closed.
We may assume after renumbering that $S_1=\{(a_1,b_1),\ldots,(a_n,b_n)\}$.
Assume that there is an $(a_i,b_i)\in S_1$ such that $a_i=c_1$. (The case that $a_i\not=c_1$ for all $i$ is similar but easier.)
Then $b_i=d_1$ is excluded because if $(a_j,b_j)\in S_1$ is another element then
$(a_j,b_j)\leq (c_1,d_1)=(a_i,b_i)$ and $(a_j,b_j)$ would not be maximal. Therefore $b_i<d_1$. 
Now pick any $(a_j,b_j)\in S_1$ different from $(a_i,b_i)$. Then $a_j=a_i$ is impossible since then either $(a_i,b_i)$ is not maximal if $b_i<b_j$ or 
$(a_j,b_j)$ is not maximal if $b_j<b_i$. Since $(a_j,b_j)\leq  (c_1,d_1)$ we conclude $a_j\leq c_1=a_i$. Since $a_j\not=c_1$ we conclude that
$a_j$ is lexicographically smaller than $c_1$ so that $ord(a_j)<ord(c_1)$.
This means that all such terms $(a_j,b_j)$ get assigned ordinals $\om^{ord(a_j)}\cdot b_j<\om^{ord(c_1)}$.
Summing up all terms for elements in $S_1$ we get a strict upper bound provided by $\om^{ord(c_1)}\cdot b_i+\om^{ord(c_1)}\leq \om^{ord(c_1)}\cdot d_1=ord(\{(c_1,d_1)\})$.
\end{proof}

Combining Corollary \ref{upbd} and Proposition \ref{lobd} now yields Theorem~\ref{mk} for $k=1$.

\subsection{Bounded lower sets in $\mathbb{N}^m\times k$}

Before we move on to $I(\mathbb{N}^m)$, let's briefly consider
$D(\mathbb{N}^m\times k)$.  We stated the type of this in Theorem~\ref{mk}.  In
this subsection we prove it.  First some notation:

\begin{notn}
For $X$ a partially-ordered set and $x\in X$, we define the upward closure
$X^{\geq x}$ to be $\{y\ge x: y\in X\}$; this is the
smallest upward closed subset of $X$ containing $x$.  
\end{notn}

Now the proof:

\begin{proof}[Proof of Theorem~\ref{mk} for $k\geq 1$.]
To prove the upper bound, note that 
there's an obvious embedding of
$D(\mathbb{N}^m \times k)$ into $D(\mathbb{N}^m)^k$, by mapping
\[
S\mapsto (S\cap (\mathbb{N}^m\times\{0\}),\ldots,S\cap
(\mathbb{N}^m\times\{k-1\})),
\]
so \[ o(D(\mathbb{N}^m \times k))\le o(\mathbb{N}^m)^{\otimes
k}=\omega^{\omega^{m-1}k}.\]
This leaves the lower
bound.  For this, we induct on $k$.  The case $k=1$ has already been proven
above, so that leaves the inductive step.

We will construct a total order extending $D(\mathbb{N}^m\times k)$ that has the
required order type.  First, choose a total order extending
$D(\mathbb{N}^m\times\{k-1\})$ of order type $\omega^{\omega^{m-1}}$; this is
possible
by the above.  We will sort the elements $S$ of $D(\mathbb{N}^m\times k)$ first
by the value of $S\cap (\mathbb{N}^m\times\{k-1\})$ (according to this order),
and then find some way to break the ties.

So consider some element $T\in D(\mathbb{N}^m\times \{k-1\})$ and consider the
set $P_T$ of $S\in D(\mathbb{N}^m\times k)$ such that $S\cap
(\mathbb{N}^m\times\{k-1\})=T$.  What is the maximum extending ordinal of this
set?  To answer this, observe that there is some element $x\in \mathbb{N}^m$
such that $(x,k-1)\notin T$.  So in fact $X^{\geq x}\times \{k-1\}$ is disjoint from
$T$; and $X^{\geq x}$ is isomorphic to $\mathbb{N}^m$.  This gives us an inclusion of
$D(\mathbb{N}^m \times (k-1))$ into $P_T$, so $o(P_T)$ is (by the induction
hypothesis) at least $\omega^{\omega^{m-1}(k-1)}$.

Therefore $o(D(\mathbb{N}^m\times k))\ge
\omega^{\omega^{m-1}(k-1)} \omega^{\omega^{m-1}} = \omega^{\omega^{m-1}k}$.
This completes the proof.
\end{proof}

\section{General lower sets in $\mathbb{N}^m$}

In this section we compute $o(I(\mathbb{N}^m))$.  As we will see,
$I(\mathbb{N}^m)\setminus \{\mathbb{N}^m\}$ can be approximately decomposed as
a product over nonempty
$C\subseteq\{1,\ldots,m\}$ of $D(\mathbb{N}^C)$; however, the exact nature of
this decomposition will be slightly different in the upper bound proof and in
the lower bound proof.

\subsection{The upper bound proof}

In this section we prove a proposition that expresses one half of this
decomposition.

We will need the following lemma, which is an easy consequence of some known
facts:

\begin{lem}
\label{rectangles}
Let $P = \alpha_1 \times \ldots \times \alpha_m$ be a finite Cartesian product
of well-orders.  Then any lower set of $P$ is a finite union of rectangles
$\beta_1 \times \ldots \times \beta_m$ for some $\beta_i\le\alpha_i$.
\end{lem}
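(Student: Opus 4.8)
The plan is to induct on $m$. The base case $m=1$ is immediate: a lower set of a well-order $\alpha_1$ is an initial segment, hence an ordinal $\beta_1\le\alpha_1$, i.e. a single (one-fold) rectangle.

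For the inductive step I would slice along the first coordinate. Write $Y=\alpha_2\times\dots\times\alpha_m$; given a lower set $L\subseteq\alpha_1\times Y$, set $L_\xi=\{y\in Y:(\xi,y)\in L\}$ for $\xi<\alpha_1$. Each $L_\xi$ is a lower set of $Y$, and because $L$ is downward closed the family $(L_\xi)_{\xi<\alpha_1}$ is non-increasing: $\xi\le\xi'$ implies $L_{\xi'}\subseteq L_\xi$. The crux will be to show that this family assumes only finitely many distinct values. Granting that, let $W_1,\dots,W_n$ enumerate the distinct slices and put $\epsilon_j=\{\xi<\alpha_1:L_\xi\supseteq W_j\}$; non-increasingness makes this an initial segment of $\alpha_1$, hence an ordinal $\le\alpha_1$. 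A direct check then gives $L=\bigcup_{j=1}^n\epsilon_j\times W_j$: if $(\xi,y)\in L$ then $y\in L_\xi=W_k$ for some $k$ and $\xi\in\epsilon_k$; conversely $\xi\in\epsilon_j$ means $W_j\subseteq L_\xi$, so $y\in W_j$ yields $(\xi,y)\in L$. By the inductive hypothesis each $W_j$ is a finite union of rectangles of $Y$, and distributing the product $\epsilon_j\times(\,\cdot\,)$ over these unions presents $L$ as a finite union of rectangles of $\alpha_1\times Y$. (When $L=\emptyset$ one may simply write $L=0\times\alpha_2\times\dots\times\alpha_m$.)

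The main obstacle is the finiteness of the set of distinct slices, and I would argue it as follows. Any two slices are comparable under inclusion, so the distinct slices form a chain $\mathcal C$ in the poset of lower sets of $Y$ under inclusion. Next, this poset has no infinite strictly decreasing chain: from $M_0\supsetneq M_1\supsetneq\cdots$ pick $y_i\in M_i\setminus M_{i+1}$; for $i<j$ we have $y_j\in M_j\subseteq M_{i+1}$ while $y_i\notin M_{i+1}$, so $y_i\not\le y_j$, i.e. $(y_i)$ is a bad sequence — impossible since $Y$, a finite product of well-orders, is a well partial order. Finally, an infinite linear order must contain an infinite strictly increasing or an infinite strictly decreasing sequence; in $\mathcal C$ the decreasing option is excluded by the previous sentence, and an infinite strictly increasing sequence $L_{\eta_0}\subsetneq L_{\eta_1}\subsetneq\cdots$ would, by non-increasingness, force $\eta_0>\eta_1>\cdots$, an infinite descending sequence in the well-order $\alpha_1$ — also impossible. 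Hence $\mathcal C$ is finite, and the induction goes through.
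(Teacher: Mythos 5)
Your proof is correct and follows essentially the same approach as the paper's: induct on the number of factors, slice along one coordinate (you use the first, the paper the last), note that the slices form a weakly decreasing chain of lower sets in the smaller product, argue this chain has finitely many distinct values, and rebuild the lower set as a finite union of (ordinal)$\times$(slice) rectangles. The only noticeable difference is that you flesh out the finiteness-of-slices step with an explicit Ramsey-style argument (ruling out both infinite decreasing chains via bad sequences and infinite increasing chains via well-orderedness of $\alpha_1$), where the paper handles it more tersely by invoking well-foundedness.
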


\begin{proof}
In general, a lower set in a well partial order is a finite union of
\emph{ideals}, which is a downward-closed set $I$ with the additional property
that if $x,y\in I$, there exists $z\ge x,y$ with $z\in I$; one may see
e.g.~\cite{ideals} for a proof, where this is a combination of Lemma~2.6 and
Proposition~2.10.  Moreover, the ideals of $X\times Y$ are precisely the sets
$I\times J$ where $I$ is an ideal of $X$ and $J$ is an ideal of $Y$; again one
may see \cite{ideals}, where this appears as Proposition~4.8.  Since obviously
an ideal of $\alpha_i$ is an ordinal $\beta_i\le\alpha_i$, the result follows.
\end{proof}

We also define $I_0(X):=I(X)\setminus\{X\}$.

In the following proposition we assume $m\geq 1$. We recall that for a partially ordered set $S$, the set $I(S)$ of lower subsets of $S$ is a lattice of subsets of $S$, and if $S$ is directed, then $I_0(S)$ is a sublattice of $I(S)$. Moreover, if $S$ is a lattice, then $D(S)$ is a sublattice of $I(S)$. In particular, $I(\omega^m)$ has the sublattices $D(\omega^m) \subseteq I_0(\omega^m)$.
We let $i = (i_1,...,i_n)$ range over all sequences $1 \leq  i_1 < \cdots < i_n \leq  m$  where $n \geq 1$, and for such $i$ we put $\vert i\rvert:= n$ and define the restriction map 
$\pi_i: \omega^m \to \omega^{\vert i\lvert}$ by $\pi_i(a) := (a_{i_1},...,a_{i_n})$ for $a = (a_1,...,a_m) \in \omega^m$. Consider the lattice
$D := \prod_i D(\omega^{\lvert i\rvert})$
where
$S \vee T = (S_i \cup T_i)$,
$S \wedge T = (S_i \cap T_i$) for $S=(S_i), T = (T_i) \in D$.
The lattice morphism
$S \mapsto \pi_i^{-1}(S): I(\omega^{\lvert i\rvert}) \to I(\omega^m)$
restrict to lattice morphisms
$S \mapsto \pi_i^{-1}(S): D(\omega^{\lvert i\rvert}) \to I_0(\omega^m)$
which combine to a map
$S = (S_i)\mapsto  \varphi(S) := \bigcup_i \pi_i^{-1}(S_i): D \to I_0(\om^m)$
satisfying $\varphi (S\vee T)=\varphi(S)\cup \varphi (T)$ for $S,T \in D$.

\begin{prop}
\label{puttogetherlem}
The map $\varphi$ is increasing and onto.
\end{prop}
\begin{proof}
The first statement is clear by the remark before the proposition. To show surjectivity, by the previous corollary it is enough to show that each rectangle 
$I = \alpha_1 \times  \cdots \times \alpha_m \in I_0(\omega^m)$, where $\alpha_1,...,\alpha_m \leq \om$, is of the form $\varphi(S)$ for some $S \in D$. 
For this let $j = (j_1,\ldots,j_n)$ where $1\leq j_1 <\cdots<j_n \leq m$ are the indices $j \in\{ 1,...,m\}$ with $\alpha_j < \omega$. 
Then $n\geq 1$, and we have
$I=\varphi(S)$ for $S=(S_i)\in D$ given by $S_j :=\pi_j(I)$ and $S_i :=\emptyset$ for $i\not=j$.
\end{proof}

Thus we can conclude the upper bound:

\begin{thm}
\label{upbdunbd}
\[ o(I(\mathbb{N}^m)) \le
\omega^{\sum_{k=1}^{m} \omega^{m-k}\binom{m}{k-1} }+ 1. \]
\end{thm}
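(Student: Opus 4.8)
The plan is to apply Proposition~\ref{puttogetherlem} with $\alpha_1=\cdots=\alpha_m=\omega$. Since the Cartesian product poset $\prod_{i\in C}\omega$ is order-isomorphic to $\mathbb{N}^{|C|}$, this yields a monotone surjection
\[
\varphi\colon P:=\prod_{\emptyset\ne C\subseteq\{1,\ldots,m\}}D\bigl(\mathbb{N}^{|C|}\bigr)\longrightarrow I_0(\mathbb{N}^m),
\]
where $I_0(\mathbb{N}^m)=I(\mathbb{N}^m)\setminus\{\mathbb{N}^m\}$ is precisely $I(\mathbb{N}^m)$ with its top element removed. Using the standard fact that a monotone surjection of well partial orders cannot increase the type, we get $o(I_0(\mathbb{N}^m))\le o(P)$; and since adjoining a top element to a well partial order adds exactly $1$ to its type, $o(I(\mathbb{N}^m))=o(I_0(\mathbb{N}^m))+1\le o(P)+1$. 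So the whole theorem reduces to the computation $o(P)=\omega^{\sum_{k=1}^{m}\omega^{m-k}\binom{m}{k-1}}$.

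For that computation, first invoke Theorem~\ref{omegabounded}, which gives $o(D(\mathbb{N}^{|C|}))=\omega^{\omega^{|C|-1}}$. There are $\binom{m}{j}$ subsets $C$ with $|C|=j$, so by De Jongh and Parikh's product formula $o(X\times Y)=o(X)\otimes o(Y)$ (iterated over the finitely many factors),
\[
o(P)=\bigotimes_{j=1}^{m}\bigl(\omega^{\omega^{j-1}}\bigr)^{\otimes\binom{m}{j}}.
\]
Now $\omega^{a}\otimes\omega^{b}=\omega^{a\oplus b}$, and the natural sum of $n$ copies of a single Cantor-normal-form term $\omega^{j-1}$ is just $\omega^{j-1}\cdot n$; hence $o(P)=\omega^{E}$ with $E=\bigoplus_{j=1}^{m}\omega^{j-1}\binom{m}{j}$. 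The exponents $j-1$ appearing here are pairwise distinct (they run over $0,1,\ldots,m-1$), so $E$ is obtained by writing the terms in decreasing order of exponent; the substitution $k=m-j+1$ turns the exponent $j-1$ into $m-k$ and the coefficient $\binom{m}{j}$ into $\binom{m}{m-k+1}=\binom{m}{k-1}$, giving $E=\sum_{k=1}^{m}\omega^{m-k}\binom{m}{k-1}$. This is the claimed value of $o(P)$, and the theorem follows.

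The only point that is not entirely mechanical is the auxiliary lemma that a monotone surjection $f\colon X\to Y$ of well partial orders satisfies $o(Y)\le o(X)$; since the excerpt recorded only the sum and product formulas of De Jongh and Parikh, I would include its short proof by transfinite induction on $o(X)$, using the inductive characterization of the type: for a proper lower set $Y'\subsetneq Y$, the preimage $f^{-1}(Y')$ is a proper lower set of $X$ and $f$ restricts to a monotone surjection onto $Y'$, so $o(Y')\le o(f^{-1}(Y'))<o(X)$, whence $o(Y')+1\le o(X)$, and taking the supremum over all such $Y'$ gives $o(Y)\le o(X)$. The fact that adjoining a top element adds $1$ to the type is immediate from the same inductive characterization, the supremum being attained at the lower set consisting of everything below the top. (One should also observe in passing that $P$ is a well partial order, being a finite product of the well partial orders $D(\mathbb{N}^{|C|})$, so that $o(P)$ is defined.)
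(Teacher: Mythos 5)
Your proposal is correct and follows essentially the same route as the paper: invoke Proposition~\ref{puttogetherlem} with $\alpha_1=\cdots=\alpha_m=\omega$ to get a monotone surjection from the product of the $D(\mathbb{N}^{|C|})$ onto $I_0(\mathbb{N}^m)$, bound $o(I_0(\mathbb{N}^m))$ by the natural product of the $o(D(\mathbb{N}^{|C|}))=\omega^{\omega^{|C|-1}}$ from Theorem~\ref{omegabounded}, and add $1$ for the top element $\mathbb{N}^m$. You simply make explicit the standard auxiliary facts (monotone surjections do not increase type; adjoining a top adds $1$) and the exponent arithmetic that the paper leaves to the reader.
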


\begin{proof}
Applying Proposition~\ref{puttogetherlem} with $\alpha_i=\omega$ for all $i$,
together with Theorem~\ref{mk}, yields that
\[ o(I_0(\mathbb{N}^m)) \le \omega^{\sum_{k=1}^{m} \omega^{m-k}\binom{m}{k-1} };
\]
since $I(\mathbb{N}^m)=I_0(\mathbb{N}^m)\cup\{\mathbb{N}^m\}$, we conclude
\[ o(I_0(\mathbb{N}^m)) \le \omega^{\sum_{k=1}^{m} \omega^{m-k}\binom{m}{k-1} }
+1 .
\]

\end{proof}

\subsection{The lower bound proof}

For the proof of the lower bound, we will need some additional definitions.
Rather than deal with fully specified lower sets in $I(\mathbb{N}^m)$, we will
also define ``partial specifications'' of such sets.

\begin{defn}
Given a function $f:S\to T$ and $A\subseteq S$, define the
``intersection image'' $\overline{f}(A)$ to be $T\setminus f(S\setminus A)$,
or equivalently to be $\{p \in T: f^{-1}(p)\subseteq A\}$.
\end{defn}

\begin{defn}
A \emph{partial specification} $X$ on $\mathbb{N}^m$ consists of a nonempty
collection $\mathscr{C}$ of subsets of $[m]$ and, for each
$C\in\mathscr{C}$, some $X_C\in I_0(\mathbb{N}^C)$, such that:
\begin{itemize}
\item if $D\subseteq[m]$ and $C\in\mathscr{C}$ with $|D|<|C|$, then
$D\in\mathscr{C}$, and
\item if $D\subseteq C\in\mathscr{C}$, then $X_D=\overline{\pi}_D(X_C)$. (Here $\pi_D=\pi_{C,D}:\mathbb{N}^C\to \mathbb{N}^D$ denotes the restriction map.)
\end{itemize}

Given a partial specification $X$ on $\mathbb{N}^m$ and a set $S\in
I_0(\mathbb{N}^m)$, we will say that $S$ is \emph{compatible} with $X$ if
$\overline{\pi}_C(S)=X_C$ for each $C\in\mathscr{C}$.  We define $\mathscr{A}_X$
to be the set of all $S\in I_0(\mathbb{N}^m)$ compatible with $X$.
\end{defn}

Observe that, for a partial specification $X$ with domain $\mathscr{C}$, if
$X_C$ is known for all maximal elements $C\in\mathscr{C}$ (under inclusion),
then $X_D$ is known for all $D\in\mathscr{C}$.

We will show here how to get a lower bound on $o(\mathscr{A}_X)$ for any partial
specification $X$, based only on the domain of $X$.  Then, to get a lower bound
on $o(I_0(\mathbb{N}^m))$, we need only take $X$ to be the unique partial
specification on $\mathbb{N}^m$ with domain $\{\emptyset\}$, since every proper
lower set in $\mathbb{N}^m$ is compatible with this specification.  (Conversely,
if the domain of $X$ is $\wp([m])$, then $X_{[m]}$ is the
unique element of $I_0(\mathbb{N}^m)$ that is compatible with $X$.)

With both the components of the upper and lower bounds laid out, we can now
prove the theorem.

\begin{prop}
\label{lowerboundprop}
Let $X$ be a partial specification on $\mathbb{N}^m$ with domain
$\mathscr{C}$.  Then
\[o(\mathscr{A}_X)\ge\bigotimes_{C\notin\mathscr{C}} \omega^{\omega^{|C|-1}}.\]
\end{prop}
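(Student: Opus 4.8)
Looking at this proposition, I need to prove a lower bound on $o(\mathscr{A}_X)$ in terms of a natural product over sets $C$ not in the domain $\mathscr{C}$.

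Let me think about the structure. A partial specification $X$ fixes the "intersection images" $\overline{\pi}_C(S)$ for $C \in \mathscr{C}$. The set $\mathscr{A}_X$ consists of all proper lower sets $S$ in $\mathbb{N}^m$ with these prescribed intersection images. I want to show this has type at least $\bigotimes_{C \notin \mathscr{C}} \omega^{\omega^{|C|-1}}$.

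The natural approach is induction on $|\wp(\{1,\ldots,m\}) \setminus \mathscr{C}|$, i.e., on how much is *not* yet specified. If $\mathscr{C} = \wp(\{1,\ldots,m\})$, then the empty product is $1$, and there's exactly one compatible $S$ (namely $X_{\{1,\ldots,m\}}$), so $o(\mathscr{A}_X) \geq 1$ — base case done.

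For the inductive step: pick a set $C_0 \notin \mathscr{C}$ that is *minimal* with this property (equivalently, maximal among those we can add while keeping $\mathscr{C}$ downward-closed-ish under the size-and-inclusion conditions). Actually, given the condition on $\mathscr{C}$ (it's a lower set in $\wp$, with the extra size condition), I should pick $C_0 \notin \mathscr{C}$ such that $\mathscr{C} \cup \{C_0\}$ is still a valid domain. Then I want to "split off" the $D(\prod_{i \in C_0} \mathbb{N}) = D(\mathbb{N}^{|C_0|})$ factor.

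The plan: classify elements $S \in \mathscr{A}_X$ by the value $\overline{\pi}_{C_0}(S) \in I_0(\mathbb{N}^{|C_0|})$. The possible values should range over something with type at least $o(D(\mathbb{N}^{|C_0|})) = \omega^{\omega^{|C_0|-1}}$ — here I'd want to realize a copy of $D(\mathbb{N}^{|C_0|})$ inside the possible intersection images, using that the other constraints leave enough freedom (there's some $x$ with the relevant coordinates "free", analogous to the argument in Theorem~\ref{omegabounded}). For each fixed value $T$ of $\overline{\pi}_{C_0}(S)$, the remaining set is $\mathscr{A}_{X'}$ where $X'$ extends $X$ by setting $X'_{C_0} = T$ (and propagating to subsets via $\overline{\pi}$ — need to check consistency), which has domain $\mathscr{C} \cup \{C_0\}$, so by induction has type $\geq \bigotimes_{C \notin \mathscr{C} \cup \{C_0\}} \omega^{\omega^{|C|-1}}$. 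Combining via the fact that a union of fibers gives type at least (type of base) $\otimes$ (min type of fiber), or rather building the total order lexicographically, yields $o(\mathscr{A}_X) \geq \omega^{\omega^{|C_0|-1}} \otimes \bigotimes_{C \notin \mathscr{C} \cup \{C_0\}} \omega^{\omega^{|C|-1}} = \bigotimes_{C \notin \mathscr{C}} \omega^{\omega^{|C|-1}}$.

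The main obstacle I expect is the "realizing a copy of $D(\mathbb{N}^{|C_0|})$ in the space of achievable intersection images $\overline{\pi}_{C_0}(S)$" step: I need to produce, for each finite lower set $F \subseteq \mathbb{N}^{|C_0|}$, an $S \in \mathscr{A}_X$ with $\overline{\pi}_{C_0}(S) = F$, in a monotonic way, while respecting *all* the already-fixed constraints $X_C$ for $C \in \mathscr{C}$ and keeping $S$ a proper lower set. The minimality of $C_0$ should ensure the relevant "slice" of $\mathbb{N}^m$ is unconstrained enough (no $C \in \mathscr{C}$ with $C \supseteq C_0$), so I can take something like $S_F = X_{\max} \cup (F \text{ pulled back} \times \text{bounded cube in other coords})$ — but making this precise and checking compatibility is the delicate part.

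<br>

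---

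\begin{proof}[Proof sketch]
The plan is to induct on $|\wp(\{1,\ldots,m\})\setminus\mathscr{C}|$, the number of subsets of $\{1,\ldots,m\}$ \emph{not} in the domain of $X$.

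\textbf{Base case.} If $\mathscr{C}=\wp(\{1,\ldots,m\})$, then the empty natural product is $1$. In this case $X_{\{1,\ldots,m\}}$ is itself an element of $I_0(\mathbb{N}^m)$, and it is the unique $S$ compatible with $X$ (any such $S$ must satisfy $\overline{\pi}_{\{1,\ldots,m\}}(S)=S=X_{\{1,\ldots,m\}}$). Hence $o(\mathscr{A}_X)\ge 1$, as required.

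\textbf{Inductive step.} Suppose $\mathscr{C}\ne\wp(\{1,\ldots,m\})$. Choose $C_0\notin\mathscr{C}$ such that $\mathscr{C}':=\mathscr{C}\cup\{C_0\}$ is again a valid domain; this is possible by taking $C_0$ minimal (with respect to inclusion) among sets of maximal cardinality not in $\mathscr{C}$, using that $\mathscr{C}\in I(\wp(\{1,\ldots,m\}))$ satisfies the stated size condition. Note in particular that no $C\in\mathscr{C}$ contains $C_0$.

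I will build a total order on $\mathscr{A}_X$ of the desired type, sorting elements $S\in\mathscr{A}_X$ first by the value $\overline{\pi}_{C_0}(S)\in I_0(\mathbb{N}^{|C_0|})$ and then breaking ties. The key claim is that the set of values $\overline{\pi}_{C_0}(S)$, as $S$ ranges over $\mathscr{A}_X$, contains a monotonic image of $D(\mathbb{N}^{|C_0|})$; more precisely, for every $F\in D(\prod_{i\in C_0}\mathbb{N})$ there is some $S_F\in\mathscr{A}_X$ with $\overline{\pi}_{C_0}(S_F)=X_{C_0\cap(\cdot)}$-compatible extension realizing $F$ in the $C_0$-coordinate, and $F\subseteq F'$ can be arranged to force $S_F\subseteq S_{F'}$ (at least after passing to the induced order). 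Granting this, and that for each fixed value $T$ in the image the corresponding fiber is exactly $\mathscr{A}_{X'}$ for the partial specification $X'$ with domain $\mathscr{C}'$ obtained by setting $X'_{C_0}=T$ and $X'_D=\overline{\pi}_D(X'_{C_0})$ for $D\subsetneq C_0$ (one checks these agree with the existing $X_D$ for $D\in\mathscr{C}$ by minimality of $C_0$), the inductive hypothesis gives $o(\mathscr{A}_{X'})\ge\bigotimes_{C\notin\mathscr{C}'}\omega^{\omega^{|C|-1}}$. Since $o(D(\mathbb{N}^{|C_0|}))=\omega^{\omega^{|C_0|-1}}$ by Theorem~\ref{omegabounded}, stacking the fibers lexicographically over the base order of type $\omega^{\omega^{|C_0|-1}}$ yields
\[
o(\mathscr{A}_X)\ \ge\ \omega^{\omega^{|C_0|-1}}\otimes\bigotimes_{C\notin\mathscr{C}'}\omega^{\omega^{|C|-1}}\ =\ \bigotimes_{C\notin\mathscr{C}}\omega^{\omega^{|C|-1}},
\]
completing the induction.

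\textbf{The main obstacle.} The delicate point is the key claim: producing, for each finite lower set $F\subseteq\mathbb{N}^{|C_0|}$, a proper lower set $S_F\in I_0(\mathbb{N}^m)$ which is compatible with $X$ (i.e.\ realizes the prescribed intersection images $X_C$ for all $C\in\mathscr{C}$) and whose $C_0$-intersection image is the prescribed one, in a way that is monotone in $F$. The construction should take $S_F$ to be a union of the ``largest'' lower set compatible with $X$ together with a suitable bounded pullback built from $F$ in the $C_0$-coordinates, crossed with a fixed cube in the remaining coordinates; the minimality of $C_0$ (no superset of $C_0$ lies in $\mathscr{C}$) is exactly what guarantees this modification does not disturb any of the already-fixed intersection images and keeps $S_F$ a \emph{proper} lower set. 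Verifying compatibility and monotonicity of this construction is where the real work lies, mirroring the fiber argument in the proof of Theorem~\ref{omegabounded} but now relative to the constraints imposed by $X$.
\end{proof}
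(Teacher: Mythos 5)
Your overall strategy is the same as the paper's: induct by adding/removing one set $C_0$ at a time from the domain, sort $\mathscr{A}_X$ by $\overline{\pi}_{C_0}(S)$, apply the inductive hypothesis to each fiber, and show the base order has type at least $\omega^{\omega^{|C_0|-1}}$ by embedding a copy of $D(\mathbb{N}^{|C_0|})$. However, there are two genuine gaps.

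First, the display in your inductive step asserts that stacking the fibers lexicographically over the base gives the \emph{natural} product $\omega^{\omega^{|C_0|-1}}\otimes\bigotimes_{C\notin\mathscr{C}'}\omega^{\omega^{|C|-1}}$. It does not: an ordered sum of fibers each of type $\geq\alpha$ over a base of type $\beta$ gives the \emph{ordinary} product $\alpha\cdot\beta$, which can be strictly smaller than $\alpha\otimes\beta$. The two coincide here only because $C_0$ must be of \emph{minimum} cardinality among sets not in $\mathscr{C}$ (so that $|C_0|\leq|D|$ for every $D\notin\mathscr{C}'$, whence $\bigoplus_{D\notin\mathscr{C}'}\omega^{|D|-1}+\omega^{|C_0|-1}=\bigoplus_{D\notin\mathscr{C}'}\omega^{|D|-1}\oplus\omega^{|C_0|-1}$). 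You instead describe $C_0$ as being ``of maximal cardinality not in $\mathscr{C}$,'' which is backwards; with that choice the domain $\mathscr{C}\cup\{C_0\}$ would generally fail to be valid, and, had it been valid, the ordinary product would genuinely fall short of the natural product. The paper flags and uses this cardinality fact explicitly.

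Second, the step you yourself flag as ``the main obstacle'' --- realizing $D(\mathbb{N}^{|C_0|})$ inside the set $\mathscr{T}$ of achievable values $\overline{\pi}_{C_0}(S)$ --- is left unproven, and it is the substantive content of the proof. The paper carries it out concretely: set $A:=\bigcup_{D\subsetneq C_0}\pi_D^{-1}(X_D)\subseteq\prod_{i\in C_0}\mathbb{N}$ (a proper lower set, since it is a finite union of proper lower sets in $\mathbb{N}^{|C_0|}$); choose $b\notin A$, so that the upper set $U(b)\cong\mathbb{N}^{|C_0|}$; and check that $V\mapsto A\cup L(V)$ (with $L(V)$ the downward closure of $V$) embeds $D(U(b))$ into $\mathscr{T}$. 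Your sketch gestures at a similar construction but defers exactly the compatibility and monotonicity verifications where the work lies.
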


In fact, by the arguments above, this lower bound will actually be an equality,
but we only care about the lower bound.  Note $o(\mathscr{A}_X)$ increases as
the domain of $X$ gets smaller; the less-specified $X$ is, the more sets are
compatible with it.

\begin{proof}
We prove this by downard induction on the size of the domain.  It's trivally
true for any partial specification $X$ on $\mathbb{N}^m$ with domain
$\wp([m])$, since in
this case one will have $|\mathscr{A}_X|=1$ and the product will be $1$ as well.
So suppose $\mathscr{C}\subseteq[m]$ is a valid domain for a partial
specification and that the proposition
holds for all partial specifications on $\mathbb{N}^m$ with that domain.
Pick some $C\in\mathscr{C}$ of maximum cardinality; we want to
show the statement holds for any partial specification with domain
$\mathscr{C}\setminus \{C\}$.

So let $X$ be a partial specification with domain $\mathscr{C}\setminus \{C\}$.  We
want to put a total order on $\mathscr{A}_X$ in order to get a lower bound on
$o(\mathscr{A}_X)$.  Given any $S\in \mathscr{A}_X$, we can obtain a partial
specification $Y$ with domain $\mathscr{C}$ by taking $Y_D=\overline{\pi}_D(S)$
for $D\in\mathscr{C}$; observe then that $S\in \mathscr{A}_Y$.  Obviously, any
such $Y$ has $Y_D=X_D$ for any $D\ne C$; the only distinguishing feature of $Y$
is the value of $Y_C$.

Note that not every $T\in I_0(\mathbb{N}^C)$ is a possible value of
$Y_C$, since if $T=Y_C$ we have the restriction that for $D\subseteq C$ we have
$\overline{\pi}_D(T)=X_D$.  But given such a $T$ we can define $Y(T)$ to be $Y$
obtained by setting $Y_C=T$.  So we will put a total order on $\mathscr{A}_X$ by
first putting a total order on the set of such $T$ (call this set
$\mathscr{T}$), and sorting elements $S$ of $\mathscr{A}_X$ by the value of
$\overline{\pi}_C(S)$; and then, for each such $T$, putting a total order on
$\mathscr{A}_{Y(T)}$.  So we will get a lower bound on $o(\mathscr{A}_X)$ of the
form $\sum_{T\in\mathscr{T}} o(\mathscr{A}_{Y(T)})$ (using the total order on
$\mathscr{T}$ that we have picked).

In fact, by the inductive hypothesis, for any $T\in\mathscr{T}$, we know that
\[o(\mathscr{A}_{Y(T)})\ge\bigotimes_{D\notin\mathscr{C}}
\omega^{\omega^{|D|-1}}.\]
Thus, we immediately get that
\[o(\mathscr{A}_X)\ge (\bigotimes_{D\notin\mathscr{C}} \omega^{\omega^{|D|-1}})
	o(\mathscr{T}).\]

It then remains to show that $o(\mathscr{T})\ge\omega^{\omega^{|C|-1}}$.  Once
we know this, we will have
\[o(\mathscr{A}_X)\ge\bigotimes_{D\notin\mathscr{C}\setminus\{C\}} \omega^{\omega^{|D|-1}},\]
because, by assumption, $|C|\le|D|$ for any $D\notin\mathscr{C}$, and so the
ordinary product here coincides with the natural product.

So let $A\subseteq \mathbb{N}^C$ be defined by $A=\cup_{D\subsetneq
C} \pi_D^{-1}(X_D)$.  Then for any $V\in D(\mathbb{N}^C)$, $A\cup
V\in\mathscr{T}$.  Pick some $b\in \mathbb{N}^C\setminus A$, and
consider $X^{\geq b}:=({\mathbb{N}^C})^{\geq b}$.
Given $V\in D(X^{\geq b})$ let $L(V)$ be the downward closure of $V$ in
$\mathbb{N}^C$.  Observe that the map from $D(X^{\geq b})$ to
$\mathscr{T}$ given by $V\mapsto A\cup L(V)$ is injective and indeed an
embedding.  Also observe that $X^{\geq b}$ is isomorphic to $\mathbb{N}^{C}$.  So by
Theorem~\ref{mk}, $o(D(X{\geq b})=\omega^{\omega^{|C|-1}}$, and so
$o(\mathscr{T})\ge\omega^{\omega^{|C|-1}}$, as needed.  This completes the
proof.
\end{proof}

We can now prove the lower bound:

\begin{thm}
\label{lowbdunbd}
\[ o(I(\mathbb{N}^m)) \ge
\omega^{\sum_{k=1}^{m} \omega^{m-k}\binom{m}{k-1} }+ 1. \]
\end{thm}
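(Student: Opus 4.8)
\textbf{Proof proposal for Theorem~\ref{lowbdunbd}.}

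The plan is to feed Proposition~\ref{lowerboundprop} the least-specified partial specification, then turn the resulting natural product into the Cantor normal form in the statement, and finally recover the trailing $+1$ from the greatest element of $I(\mathbb{N}^m)$. First I would take $X$ to be the (unique) partial specification on $\mathbb{N}^m$ whose domain $\mathscr{C}$ is $\{\emptyset\}$. This is legitimate: $\emptyset$ is the minimum of $\wp(\{1,\ldots,m\})$, so $\{\emptyset\}$ is a nonempty lower set; the cardinality condition holds vacuously; and $X_\emptyset$ is forced to be the empty lower set of the empty product. Since $\pi_\emptyset^{-1}$ of the unique point of the empty product is all of $\mathbb{N}^m$, one has $\overline{\pi}_\emptyset(S)=\emptyset=X_\emptyset$ for every $S\in I_0(\mathbb{N}^m)$, i.e.\ every proper lower set is compatible with $X$, so $\mathscr{A}_X=I_0(\mathbb{N}^m)$. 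Proposition~\ref{lowerboundprop} then gives
\[ o(I_0(\mathbb{N}^m))=o(\mathscr{A}_X)\ \ge\ \bigotimes_{\emptyset\ne C\subseteq\{1,\ldots,m\}} \omega^{\omega^{|C|-1}}. \]

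Next I would evaluate this natural product. Using that a natural product of powers of $\omega$ equals $\omega$ raised to the natural sum of the exponents, the right-hand side is $\omega^{\bigoplus_{\emptyset\ne C}\omega^{|C|-1}}$. The exponents $\omega^{|C|-1}$ take only the values $\omega^0,\ldots,\omega^{m-1}$, which are distinct pure powers of $\omega$, so the natural sum merely collects coefficients; grouping the nonempty subsets of $\{1,\ldots,m\}$ by cardinality (there are $\binom{m}{k}$ of size $k$) yields
\[ \bigotimes_{\emptyset\ne C\subseteq\{1,\ldots,m\}} \omega^{\omega^{|C|-1}} \ =\ \omega^{\sum_{k=1}^{m}\binom{m}{k}\,\omega^{k-1}}, \]
already in Cantor normal form. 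The reindexing $k\mapsto m+1-k$ turns $\sum_{k=1}^m\omega^{m-k}\binom{m}{k-1}$ into $\sum_{k=1}^m\binom{m}{k}\omega^{k-1}$, so the exponent here is exactly the one in the statement, and hence $o(I_0(\mathbb{N}^m))\ge\omega^{\sum_{k=1}^{m}\omega^{m-k}\binom{m}{k-1}}$.

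Finally I would pass from $I_0$ to $I$. Since $\mathbb{N}^m$ is the greatest element of $I(\mathbb{N}^m)$ under inclusion, it belongs to no proper lower set of $I(\mathbb{N}^m)$; thus the proper lower sets of $I(\mathbb{N}^m)$ are precisely the lower sets of $I_0(\mathbb{N}^m)=I(\mathbb{N}^m)\setminus\{\mathbb{N}^m\}$, and the inductive characterization of $o$ gives $o(I(\mathbb{N}^m))=o(I_0(\mathbb{N}^m))+1$. Combining this with the bound above finishes the proof.

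\textbf{Main obstacle.} With Proposition~\ref{lowerboundprop} available, the argument is essentially bookkeeping; the only step that needs genuine care is the ordinal arithmetic in the middle paragraph — checking that the natural product really is $\omega$ to the natural sum, that the natural sum of the $\omega^{k-1}$'s collapses to plain coefficient-collection because the exponents are distinct pure powers, and that the reindexing faithfully reproduces the binomials $\binom{m}{k-1}$ as written. This is shallow but the displayed formula is fussy enough that an off-by-one slip is the likeliest failure mode.
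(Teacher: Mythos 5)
Your argument matches the paper's proof of Theorem~\ref{lowbdunbd}: both apply Proposition~\ref{lowerboundprop} to the partial specification with domain $\{\emptyset\}$, identify $\mathscr{A}_X$ with $I_0(\mathbb{N}^m)$, and add $1$ to pass to $I(\mathbb{N}^m)$. You simply spell out the ordinal arithmetic turning the natural product into the stated Cantor normal form, and the justification for the final $+1$, which the paper leaves implicit; both are correct.
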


\begin{proof}
Let $X$ be the unique partial specification on
$\mathbb{N}^m$ with domain $\{\emptyset\}$; then
$I_0(\mathbb{N}^m)=\mathscr{A}_X$.  By Proposition~\ref{lowerboundprop}, then,
\[ o(I_0(\mathbb{N}^m)) = o(\mathscr{A}_X) \ge
\omega^{\sum_{k=1}^{m} \omega^{m-k}\binom{m}{k-1} }. \]
Therefore
\[ o(I(\mathbb{N}^m)) \ge
\omega^{\sum_{k=1}^{m} \omega^{m-k}\binom{m}{k-1} } + 1, \]
proving the theorem.
\end{proof}

\subsection{Putting together the proof}
\begin{proof}
Theorem ~\ref{omegaunbounded} now follows from Theorems~\ref{upbdunbd} (for the upper bound) and 
~\ref{lowbdunbd} (for the lower bound).
\end{proof}

\section{Application to monomial ideals}
\label{secapp}

We now discuss applications to computational complexity and provide complementary results to Corollary 3.27 in \cite{AP}.
In the sequel we work with ordinals below $\om^{\om^\om}$. 
For these ordinals we consider the Hardy functions $H_\alpha:\Nat\to\Nat$, defined recursively as follows.
Let $H_0(x):=x$, $H_{\alpha+1}(x):=H_\alpha(x+1)$ and for a limit $\lambda$ let
$H_{\lambda}[x]:=H_{\lambda[x]}(x+1)$ where $\lambda[x]$ denotes the $x$-th member of the canonical fundamental sequence for $\lambda$. 
These fundamental sequences are defined by recursion as follows.
If $\lambda=\om^{\lambda'}$ with $\lambda'$ a limit then $\lambda[x]=\om^{\lambda'[x]}$. 
If $\lambda=\om^{\beta+1}$ then $\lambda[x]=\om^{\beta}\cdot x$. 
If $\lambda=\om^{\beta}+\lambda'$ with $\lambda'<\lambda$ a limit  then $\lambda[x]=\om^{\beta}+\lambda'[x]$. 

For technical reasons we also define $(\alpha+1)[x]:=\alpha$.

In the sequel we stick for simplicity to the case $k=2$.
We believe that the case of more than two factors can be carried out analogously.

By standard results (see, for example, Lemma 4 in \cite{BCW}) it is known that
$H_{\om^\om}$ is a variant of the non primitive recursive Ackermann function, and $H_{\om^{\om+2}}$ is
roughly the result of iterating the Ackermann function twice.
We have shown in section three that $o(I(\Nat^2))=\om^{\om+2}$. In this section we show that $H_{\om^{\om+2}}$
bounds the lengths of effectively given bad sequences in $I(\Nat^2)$. We also show a corresponding result for monomial ideals
of a polynomial ring over a field with two variables.

Our proof for $I(\Nat^2)$ highlights in particular how the unbounded downward closed subsets are responsible for the addition of two 
in the ordinal bound. Moreover it highlights the rule of thumb that the Hardy functions indexed by ordinals below the type of 
a well partial order describe the complexity of effectively given bad finite sequences.

Let us define a complexity measure for downward closed sets in $\Nat^k$.
For $k=1$ and finite $\al$ put $ M\al=\al$ and for $\al=\Nat$ put $M\al=0$. This measure is extended to cartesian products of initial segments as follows:
put $M(\al_1\times \cdots \times \al_k):=\max\{M\al_i:i\leq k\}$. Almost the same measure can be applied when dealing with monomial ideals 
which can be identified with upward closed sets in $\Nat^k$ (see the next theorem).

If a downward closed set $D$ is a shortest finite union of $k$-times cartesian products of initial segments $J_i$  (this means that the number of products used in the representation of $D$ is minimal) then we put
$MD:=\max\{M(J_i)\}$. Then for any natural number $d$ there will only be finitely many downward closed sets of complexity not exceeding $d$.

\begin{Proposition}\label{twobound}
For each $K\in\Nat$ there are downward closed sets $D_1,\ldots,D_L$ of $\Nat^2$ such 
that $L\geq H_{\om^{\om+2}}(K)-K$,  $M(D_i)\leq (K+i)^2$ for $1\leq i\leq L$, and $D_i\not\subseteq  D_j$ for $1\leq i<j\leq L$.
\end{Proposition} 

\begin{proof}
Let $\al_0:=\om^{\om+2}$ and let $\al_{i+1}:=\al_i[K+i]$.
Then $\alpha_i>0$ yields $\alpha_i>\alpha_{i+1}$ and moreover we find
\[H_{\al_0}(K)=H_{\al_0[K]}(K+1)=\ldots=H_{\al_0[K][K+1]\ldots[K+L-1]}(K+L)=K+L\] where $L$ is minimal with $\al_L=0$.

For $\al=\om^{\om+1}\cdot p+\om^\om\cdot q+\om^{a_1}\cdot b_1+\ldots+\om^{a_r}\cdot b_r$  in normal form where $p,q,r\geq 0$ and $a_1>\ldots>a_r$ and
$b_1,\ldots,b_r>0$ let
\[N\al:=p+q+b_1+\cdots+b_r+\max\{a_1,\ldots,a_r\}.\]
Then an induction on $i$ yields $N\al_i\leq (K+i)^2$.

For a set $S\subseteq \Nat^2$ let $S_\leq$ be the least downward closed set containing $S$.
For $\al=\om^{\om+1}\cdot p+\om^\om\cdot q+\om^{a_1}\cdot b_1+\ldots+\om^{a_r}\cdot b_r$
in normal form
define a downward closed set $D(\al)$ as follows:
\[
D(\al):=(p\times \Nat)\cup (\Nat\times q)\cup\{(p+a_1+1,q+b_1),\ldots,(p+a_r+1,q+b_1+\ldots+b_r)\}_{\leq}.
\]

Assume that $\al'=\om^{\om+1}\cdot p'+\om^\om\cdot q'+\om^{a_1'}\cdot b_1'+\ldots+\om^{a'_{r'}}\cdot b'_{r'}$
is in normal form 
and assume $\al'<\al$. We show that $D(\al)$ is not a subset of $D(\al')$.
The proof can be established by a simple case distinction.

Case 1. $p'<p$. Then $p\times\Nat$ is not contained in $D(\al')$.

Case 2. $p'=p$ and $q'<q$. Then $\Nat\times q$ is not contained in $D(\al')$.

Case 3. $p=p'$, $q=q'$ and there exists a $j_0$ such that $a'_{j_0}<a_{j_0}$ or $(a_{j_0}=a'_{j_0}$ and $b_{j_0}'<b_{j_0}$) and for all $l<j_0$ we have
$a_l=a'_l$ and $b_l=b'_l$.

Then $ \{(p+a_{j_0}+1,q+b_1+\cdots +b_{j_0}\}_{\leq}$ is not contained in $D(\al')$.
This can be checked by verifying that $(p+a_{j_0}+1,q+b_1+\cdots +b_{j_0})$ is in no interval showing up in the representation of $D(\al')$.
The first two intervals are left out since $p<p+a_{j_0}+1$ and $q<q+b_1+\cdots +b_{j_0}$.
The intervals with index $i> j_0$ do not contain $p+a_{j_0}+1$ in their left coordinates and the intervals with index 
$i<j$ do not contain $q+b_1+\cdots +b_{j_0}$ in their right coordinates.
A similar argument applies for $i=j_0$.

The result follows by putting $D_i:=D(\al_i)$ for $i\geq 1.$
\end{proof}

Let us now consider polynomial rings in two variables $X,Y$ over a field $F$.  
We believe that the case of more then two variables can be carried out analogously.

The
degree of a monomial ideal is the maximum degree of the minimal generating set
of monomials.  We denote by $(m_1,\ldots,m_l)$ the monomial ideal generated by
the monomials $m_i$. The degree of a monomial ideal with minimal representation $(m_1,\ldots,m_l)$ is equal to $\max\{\deg(m_i)\}$

\begin{thm}
 For each $K\in\Nat$ there are monomial ideals $I_1,\ldots,I_L$ of  $F(X,Y)$ such 
that $L\geq H_{\om^{\om+2}}(K)-K$, $\deg(I_i)\leq (K+i)^2$ for
$1\leq i\leq L$ and $I_j\not\subseteq I_i$ for $1\leq i<k\leq L$.\end{thm}
\begin{proof}
We associate to an ordinal $\alpha<\om^{\om+2}$ a monomial ideal $I(\alpha)$ of $F[X,Y]$ such that for the descending sequence $(\al_i)$ 
of ordinals constructed in the proof of the last lemma we have $\deg(I(\alpha_i))\leq (K+i)^2$ for $i\geq 1$ and $I(\alpha_j)\not\subseteq I(\alpha_i)$ for $i<j$.

Assume that $\al=\om^{\om+1}\cdot p+\om^\om\cdot q+\om^{a_1}\cdot b_1+\ldots+\om^{a_{r}}\cdot b_{r}$
is in normal form.
Let $c_j:=b_1+\cdots+b_j$.
Let $$I(\al):=(X^{a_1+p+1}Y^{q},X^{a_2+p+1}Y^{c_1+q+1},\ldots,X^{a_r+p+1}Y^{c_{r-1}+q+1},X^pY^{c_{r}+q+1}).$$
For $r=0$ we put $I(\al):=(X^{p+1}\cdot X^q,X^p\cdot Y^{q+1})$.

Assume that $\al'=\om^{\om+1}\cdot p'+\om^\om\cdot q'+\om^{a_1'}\cdot b_1'+\ldots+\om^{a'_{r'}}\cdot b'_{r'}$ 
is in normal form and assume $\al'>\al$.
Let $c'_j:=b'_1+\cdots+b'_j$.
We show that $I(\al)$ is not a subset of $I(\al')$.

The proof can be established by a simple case distinction.

Case 1. $p<p'$. Then $X^pY^{c_{r}+q+1}$ is not an element of  $I(\al')$ (even for $r=0$)
since all generators of $I(\al')$ contain a multiple of $X^{p'}$.

Case 2. $p=p'$ and $q<q'$. Then $X^{a_1+p+1}Y^q$ (or $X^{p+1}Y^q$ in the case $r=0$) is not an element of  $I(\al')$
since all generators of $I(\al')$ contain a multiple of $Y^{q'}$.

Case 3. $p=p'$, $q=q'$ and there exists a $j_0$ such that $a_{j_0}<a_{j_0}'$ or $(a_{j_0}=a'_{j_0}$ and $b_{j_0}<b'_{j_0}$) and for all $l<j_0$ we have
$a_l=a'_l$ and $b_l=b'_l$.

Case 3.1.  $a_{j_0}<a'_{j_0}$. Then $X^{a_{j_0}+p+1}Y^{c_{j_0-1}+q+1}\not \in I(\al')$. Indeed, for $i<j_0$ we have $a_{j_0}<a_i=a_i'$ and hence 
\[X^{a_{j_0}+p+1}Y^{c_{j_0-1}+q+1}\not \in(X^{a'_i+p+1}Y^{c'_{i-1}+q+1}).\] If $i=j_0$ then from $a_{j_0}<a_{j_0}'$ we conclude 
\[X^{a_{j_0}+p+1}Y^{c_{j_0-1}+q+1}\not \in(X^{a'_{j_0}+p+1}Y^{c'_{j_0-1}+q+1}).\]
For $i>j_0$ 
we find $X^{a_{j_0}+p+1}Y^{c_{j_0-1}}\not \in(X^{a'_i+p+1}Y^{c'_{i-1}+q+1})$ because $c_{j_0-1}=c'_{j_0-1}<c'_{i-1}$.

Case 3.2. $(a_{j_0}=a'_{j_0}$ and $b_{j_0}<b'_{j_0}$).
Then $X^{a_{j_0+1}+p+1}Y^{c_{j_0}+q+1}\not \in I(\al')$. Indeed, for $i<j_0$ we obtain $X^{a_{j_0+1}+p+1}Y^{c_{j_0}+q+1}\not \in (X^{a'_{i+1}+p+1}Y^{c'_i+q+1})$ since $a_{i+1}'=a_{i+1}\geq a_{j_0}>a_{j_0+1}$.
For $i\geq j_0$ we conclude \[X^{a_{j_0+1}+p+1}Y^{c_{j_0}+q+1}\not \in
(X^{a'_{i+1}+p+1}Y^{c'_i+q+1})\] since $c_{j_0}<c'_{j_0}\leq c_i'$.
\end{proof}

{\bf Alternative proof of Proposition 4.1:} The referee pointed out that Proposition 4.1 can be deduced from Theorem 4.2 in the following very elegant way: Given a monomial ideal $I$ of $F[X,Y]$ let 
$$D(I):=\{(i,j)\in \Nat^2: X^iY^j\not\in I\},$$ a downward closed subset of $\Nat^2$ satisfying $M(D(I))\leq \deg(I)$. If for a given value $K\in \Nat$ we have $I_1,\ldots,I_L$ as in Theorem 4.2, then $D(I_1),\ldots,D(I_L)$ are downward closed subsets of $\Nat^2$ with the properties required in 4.1.\hfill $\Box$

The lower bound provided by previous theorem is
essentially sharp  in the sense that $K\mapsto L$ depends elementary recursively  on $H_{\al}$ where $\al$
is the maximal order type under consideration.
This can be shown by a reification analysis using the results on the upper bound for the maximal order type involved.
For this one can exploit that the lengths of elementary descending sequences of ordinals can be bounded in terms of the Hardy functions
as shown for example in \cite{BCW}.

\subsection*{Acknowledgements}  Thanks to David Belanger and to the anonymous
referee for helpful comments.

\end{document}